\author{Tuomas Orponen}
\title[Visibility problem]{On the dimension of visible parts}
\address{Department of Mathematics and Statistics\\ University of Helsinki,
P.O. Box 68 (Pietari Kalmin katu 5)\\
FI-00014 University of Helsinki\\
Finland}
\email{tuomas.orponen@helsinki.fi}
\date{\today}
\subjclass[2010]{28A80 (primary) 28A78 (secondary)}
\keywords{Visible parts, Hausdorff dimension}
\thanks{T.O. is supported by the Academy of Finland via the projects \emph{Quantitative rectifiability in Euclidean and non-Euclidean spaces} and \emph{Incidences on Fractals}, grant Nos. 309365, 314172, 321896. T.O. is also supported by the University of Helsinki via the project \emph{Quantitative rectifiability of sets and measures in Euclidean spaces and Heisenberg groups}, project No. 7516125.}
\newcommand{\R}{\mathbb{R}}
\newcommand{\N}{\mathbb{N}}
\newcommand{\calT}{\mathcal{T}}
\newcommand{\calL}{\mathcal{L}}
\newcommand{\calD}{\mathcal{D}}
\newcommand{\calH}{\mathcal{H}}
\newcommand{\calG}{\mathcal{G}}
\newcommand{\calF}{\mathcal{F}}
\newcommand{\calQ}{\mathcal{Q}}
\newcommand{\calM}{\mathcal{M}}
\newcommand{\spt}{\operatorname{spt}}
\newcommand{\Hd}{\dim_{\mathrm{H}}}
\newcommand{\spa}{\operatorname{span}}
\newcommand{\diam}{\operatorname{diam}}
\newcommand{\vis}{\mathrm{Vis}}
\def\Barint_#1{\mathchoice
          {\mathop{\vrule width 6pt height 3 pt depth -2.5pt
                  \kern -8pt \intop}\nolimits_{#1}}%
          {\mathop{\vrule width 5pt height 3 pt depth -2.6pt
                  \kern -6pt \intop}\nolimits_{#1}}%
          {\mathop{\vrule width 5pt height 3 pt depth -2.6pt
                  \kern -6pt \intop}\nolimits_{#1}}%
          {\mathop{\vrule width 5pt height 3 pt depth -2.6pt
                  \kern -6pt \intop}\nolimits_{#1}}}
\numberwithin{equation}{section}
\theoremstyle{plain}
\newtheorem{thm}[equation]{Theorem}
\newtheorem*{"thm"}{"Theorem"}
\newtheorem{lemma}[equation]{Lemma}
\newtheorem{proposition}[equation]{Proposition}
\theoremstyle{definition}
\theoremstyle{remark}
\newtheorem{remark}[equation]{Remark}
\newcommand{\nref}[1]{(\hyperref[#1]{#1})}
\DeclareMathSymbol{\intop}  {\mathop}{mathx}{"B3}
\begin{document}

\begin{abstract} I prove that the visible parts of a compact set in $\R^{n}$, $n \geq 2$, have Hausdorff dimension at most $n - \tfrac{1}{50n}$ from almost every direction. \end{abstract}

\maketitle

\tableofcontents

\section{Introduction}  Let $n \geq 2$, $e \in S^{n - 1}$, and let $\ell_{e} := \{te : t \geq 0\}$ be the "positive" closed half-line spanned by $e$. Let $K \subset \R^{n}$ be compact. The \emph{visible part of $K$ in direction e}, denoted $\vis_{e}(K)$, is the set of points $x \in \R^{n}$ satisfying
\begin{displaymath} (x + \ell_{e}) \cap K = \{x\}. \end{displaymath}
Alternatively, $\vis_{e}(K)$ is the set of points $x \in K$ with the property
\begin{equation}\label{form41} y \in K \quad \text{and} \quad \pi_{e}(x) = \pi_{e}(y) \quad \Longrightarrow \quad y \cdot e \leq x \cdot e. \end{equation}
Here, and in the sequel, I will write $\pi_{e} \colon \R^{n} \to e^{\perp}$ for the orthogonal projection to the $(n - 1)$-plane $e^{\perp}$. Evidently $\vis_{e}(K) \subset K$, so $\Hd \vis_{e}(K) \leq \Hd K$. Here $\Hd$ stands for Hausdorff dimension. Since $\pi_{e}(K) = \pi_{e}(\vis_{e}(K))$, it follows from the Marstrand-Mattila projection theorem \cite{MR0063439,MR0409774} (or \cite[Corollary 9.4]{zbMATH01249699}) that
\begin{displaymath} \Hd \vis_{e}(K) \geq \min\{\Hd K,n - 1\} \end{displaymath} 
for $\calH^{n - 1}$ almost every $e \in S^{n - 1}$. Does the converse inequality hold? This \emph{visibility conjecture} is a well-known open question in geometric measure theory, mentioned explicitly for example in \cite[(1.3)]{MR2928497}, \cite[Conjecture 1.3]{MR2988729}, and \cite[Problem 11]{MR2044636}. The answer is positive if $\Hd K \leq n - 1$, simply because $\vis_{e}(K) \subset K$. So, the open question concerns the case $\Hd K > n - 1$, and, explicitly, the problem is then to show that
\begin{displaymath} \Hd \vis_{e}(K) = n - 1 \quad \text{for $\calH^{n - 1}$ a.e.} \quad e \in S^{n - 1}. \end{displaymath}
To give an idea of what is involved, consider a construction of Davies and Fast \cite{MR492190} from 1978: there exists a compact set $K \subset \R^{2}$ with $\Hd K = 2$ such that $K = \vis_{e}(K)$ for a dense $G_{\delta}$-set of directions $e \in S^{1}$. In particular, $\Hd \vis_{e}(K) = 2$ for these directions $e$. It has been open, until now, if $\Hd \vis_{e}(K) = 2$ is possible for a set of directions $e \in S^{1}$ of positive measure. Theorem \ref{main} says that it is not. It has, however, been known since Marstrand's slicing theorem \cite{MR0063439} in 1954 that if $\Hd K > 1$, then $K = \vis_{e}(K)$ can only hold for a null set of directions $e \in S^{1}$. More precisely, the main result in \cite{MR3145914} shows that this is only possible for a set of directions $e \in S^{1}$ of dimension $\leq 2 - \Hd K$.

Meanwhile, a positive answer -- in fact a full solution -- to the visibility problem in $\R^{2}$ has been obtained for several classes of special sets:
\begin{itemize}
\item Quasicircles, graphs of continuous functions, and some self-similar sets \cite{MR1975783},
\item Self-similar sets (with enough separation) whose projections are intervals \cite{MR2988729},
\item Fractal percolation (almost surely) \cite{MR2928497}.
\end{itemize}
Another remarkable partial result is due to O'Neil \cite{MR2327025}: he considers a variant of the visibility problem concerning the sets $\vis_{x}(K)$ -- the visible parts of $K$ from points $x \in \R^{2} \, \setminus \, K$ (the precise definition is easy to guess, or see \cite{MR2327025}). Then, if $\Gamma \subset \R^{2}$ is a compact continuum with $s := \Hd \Gamma \geq 1$, O'Neil proves that 
\begin{displaymath} \Hd \vis_{x}(\Gamma) \leq 1/2 + \sqrt{s - 3/4} \end{displaymath}
for Lebesgue almost every viewpoint $x \in \R^{2} \, \setminus \, \Gamma$. The right hand side of O'Neil's inequality is strictly smaller than $s$ for $s \in (1,2]$ and also stays bounded away from $2$ as $s \nearrow 2$. The main caveat in O'Neil's result is that it uses the continuum hypothesis (namely the hypothesis that $\Gamma$ is a continuum, not the other continuum hypothesis!) in an essential way, and in particular does not rule out the possibility of positively many $2$-dimensional visible parts for totally disconnected sets. 

For general compact sets in $\R^{n}$ (or even $\R^{2}$), the only positive result, as far as I know, is \cite[Theorem 1.1]{MR2177426}: a special case of it implies that if $K \subset \R^{n}$ is a compact set with $0 < \calH^{s}(K) < \infty$ for $n - 1 < s \leq n$, then $\calH^{s}(\vis_{e}(K)) = 0$ for $\calH^{n - 1}$ almost every $e \in S^{n - 1}$. So, in this sense, visible parts of $K$ tend to be smaller than $K$, as soon as $\Hd K > n - 1$. The main result of this paper improves on this conclusion considerably for sets with dimension sufficiently close to $n$:

\begin{thm}\label{main} Let $K \subset \R^{n}$ be compact, $n \geq 2$. Then
\begin{displaymath} \Hd \vis_{e}(K) \leq n - \tfrac{1}{50n} \quad \text{for $\calH^{n - 1}$ a.e. } e \in S^{n - 1}. \end{displaymath} 
\end{thm}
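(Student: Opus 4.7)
\emph{Plan.} I would argue by contradiction: assume $E := \{e \in S^{n-1} : \Hd \vis_{e}(K) > n - \tfrac{1}{50n}\}$ has positive $\calH^{n-1}$-measure, and aim to force a subset of $K$ to be Euclidean-porous with a Hausdorff dimension too large for Salli's porosity-dimension bound to be consistent with. Applied measurably in $e$, Frostman's lemma yields a sub-family $E_{0} \subset E$ of positive measure, probability measures $\mu_{e}$ supported on $\vis_{e}(K)$, and an exponent $\alpha > n - \tfrac{1}{50n}$ with $\mu_{e}(B(x, r)) \leq Cr^{\alpha}$ uniformly. I would then form the joint measure $d\tilde\nu(e, x) = \1_{E_{0}}(e)\, d\mu_{e}(x)\, d\calH^{n-1}(e)$ on $S^{n-1} \times \R^{n}$: the product-Frostman estimate gives $\Hd \spt \tilde\nu \geq n - 1 + \alpha$, and Mattila's slicing theorem applied to the projection $(e, x) \mapsto x$ produces, for $\pi_{\R^{n}*}\tilde\nu$-typical $x$, a visibility fibre $\Omega_{x} := \{e \in E_{0} : x \in \vis_{e}(K)\}$ of dimension $\Hd \Omega_{x} \geq \alpha - 1 > n - 1 - \tfrac{1}{50n}$.

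\emph{From visibility to porosity.} For such an $x$ and each small $r > 0$, the visibility fan $\{x + t\omega : \omega \in \Omega_{x},\, 0 < t < r\}$ is disjoint from $K$ and, by the Marstrand product formula, has Hausdorff dimension at least $n - \tfrac{1}{50n}$ inside $B(x, r) \setminus K$. The crucial step is to convert this into a Euclidean ball $B(y, \eta r) \subset B(x, r) \setminus K$ for a constant $\eta = \eta(n) > 0$. I would attempt this via a quantitative radial-projection or Kaufman--Mattila-type exceptional-set estimate for $\Omega_{x}$, or via a multi-scale pigeonhole that, at some dyadic scale, locates a definite-size spherical cap inside $\Omega_{x}$. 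Once this is accomplished, $K$ is $\eta$-porous on a subset $X \subset K$ of positive $\nu$-measure, where $\nu$ is the $\R^{n}$-marginal of $\tilde\nu$. Salli's dimension theorem gives $\Hd X \leq n - c_{1}\eta^{n}$, while $\nu(X) > 0$ together with the Frostman exponent of $\nu$ forces $\Hd X > n - \tfrac{1}{50n}$, producing the contradiction as long as $c_{1}\eta^{n} > \tfrac{1}{50n}$.

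\emph{Main obstacle.} The whole argument hinges on the cone-to-ball passage in the middle step: a large Hausdorff-dimensional subset of $S^{n-1}$ need not contain any spherical cap, so Euclidean porosity is not automatic. Executing this step quantitatively -- presumably by exploiting incidence or radial-projection bounds on the sphere, combined with a dyadic multi-scale argument -- is the technical heart, and the precise numerical value $\tfrac{1}{50n}$ almost certainly falls out of balancing the Frostman exponent, the slicing loss, the cap-extraction quantifier, and the explicit $n$-dependence in Salli's porosity-dimension inequality.
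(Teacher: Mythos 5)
Your approach is genuinely different from the paper's, and unfortunately the step you yourself flag as the main obstacle is not merely a technicality to be filled in but a fatal gap. The paper does not go through porosity at all: after building a Frostman measure $\mu$ with exponent $n - \tau$ on $K$ and restricting it to dyadic cubes $Q$ of side $\delta^{\varepsilon}$, it classifies lines parallel to $e$ into ``good'' lines (which either meet few heavy $\delta$-cubes in each $\calQ_{\delta}(Q)$, or hit $K \cap \overline{Q}$ and hence block visibility above $Q$) and ``bad'' lines (which meet many heavy $\delta$-cubes in some $\calQ_{\delta}(Q)$ yet miss $K \cap \overline{Q}$). The visible part on good lines is estimated by a direct $\delta$-cube covering count; the union of bad lines is shown to have small Hausdorff content by a Fourier-analytic convolution argument (a quantitative version of Proposition \ref{mainProp}), using that typical projections $\pi_{e}(\mu_{Q})$ have finite $(s-(n-1))$-energy. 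At no point is a Euclidean gap inside $B(x,r)\setminus K$ produced, and this is essential to making the numerics close.

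The gap in your plan is the cone-to-ball passage. Even if $\Omega_{x} := \{e : x \in \vis_{e}(K)\}$ has full Hausdorff dimension $n-1$, or indeed positive $\calH^{n-1}$-measure, the fan $\{x + t\omega : \omega \in \Omega_{x},\, 0 < t < r\}$ need not contain any Euclidean ball: take $\Omega_{x}$ a fat Cantor set on $S^{n-1}$ with dense complement, so the fan has empty interior in $B(x,r)$. No pigeonhole over dyadic scales rescues this, because the complement of $\Omega_{x}$ can be dense at every scale; and no radial-projection or incidence estimate for $\Omega_{x}$ alone can conjure a cap out of a measure-theoretically large but topologically meagre direction set. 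Consequently ``fan disjoint from $K$'' does not give $\eta$-porosity for any uniform $\eta > 0$, and Salli's theorem cannot be invoked. A further sign that the route cannot work as stated: Salli's bound $\Hd X \leq n - c_{1}\eta^{n}$ requires a porosity constant bounded below at all small scales, whereas any cap you might hope to extract from $\Omega_{x}$ would have radius depending on $x$ and on $r$ with no uniformity. The paper's method avoids all of this by never trying to produce a hole in $K$; instead it shows that once one heavy $\delta$-cube in a stack genuinely contains a point of $K$ on a given line, every cube further along that line is blocked from view, and the exceptional ``percolating'' lines form a set of small content by the $L^{2}$-Fourier estimate. That replacement of a pointwise gap by an averaged blocking statement is precisely what the porosity framework cannot capture.
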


\begin{remark} The constant $50$ is a little arbitrary, and could be slightly lowered by optimising the argument. On the other hand, it seems likely that more ideas will be needed to get an upper bound of the form $n - c$ for some absolute $c > 0$. In particular, there is a clear obstruction why the method cannot yield a universal upper bound lower than $n - \tfrac{1}{2}$. Namely, the final part of the proof of Theorem \ref{main} in Section \ref{s:badLines} deals with "bad" lines $\mathcal{L}_{b,e}$ parallel to $e \in S^{n - 1}$, whose union is denoted $L_{b,e}$. The proof has nothing to say about $L_{b,e} \cap \vis_{e}(K)$, but it will be shown, roughly speaking, that $\Hd L_{b,e} \leq 2(n - 1) + 1 - \Hd K$ for a.e. $e \in S^{n - 1}$. Consequently, if $\Hd K < n - \tfrac{1}{2}$, it may happen than $\Hd L_{b,e} > \Hd K$, and the trivial estimate $\Hd \vis_{e}(K) \leq \Hd K$ beats the best bound the proof of Theorem \ref{main} can offer.   \end{remark}

I close the section by mentioning that the "visibility problem" may refer to many distinct questions within geometric measure theory. For example: from how many viewpoints can a planar set of dimension $> 1$ be invisible? Or how to quantify the invisibility of purely $1$-unrectifiable sets? For more reading on these topics, see for example \cite{MR3458388,MR3188064,MR1762426,MR1798576,MR3526481,MR2641082,MR3778538,MR3892404,MR2865537,MR2329222}. 

\subsection{A few words on the method} The idea that visible parts should typically be at most $1$-dimensional quite likely originates from the following observation. Let $\delta \in 2^{-\N}$, let $\calD_{\delta}$ be the family of dyadic squares $Q \subset [0,1)^{2}$ of side-length $\delta$, and let $\calF \subset \calD_{\delta}$ be an arbitrary collection. Consider the union
\begin{displaymath} F := \bigcup_{Q \in \calF} Q. \end{displaymath}
Then 
\begin{equation}\label{form5} N(\vis_{(1,0)}(F),\delta) \leq \delta^{-1}. \end{equation}
Here $N(A,\delta)$ is the minimal number of balls of radius $\delta$ required to cover a (bounded) set $A$. The point is simply that whenever two squares $Q,Q' \in \calF$ lie in the same vertical "column", then the lower completely "blocks the upper from view". On the other hand, the collection of "lowest" squares in $\calF$ clearly has cardinality $\leq \delta^{-1}$.

Why does this argument not prove the whole conjecture? Assume that $K \subset F$ is a compact set such that $K \cap Q \neq \emptyset$ for all $Q \in \calF$. Then $F$ can be viewed as a "$\delta$-discretisation" of $K$. Nonetheless, \eqref{form5} implies absolutely nothing about $\vis_{(1,0)}(K)$: the visible part of $K$ can easily contain points in multiple squares of $\calF$ -- even all of them -- in any fixed vertical column. Therefore, the best universal estimate is the trivial one: $N(\vis_{(1,0)}(K),\delta) \lesssim \delta^{-2}$.

Evidently, it would be useful to know that if $Q,Q' \in \calF$ lie in the same vertical column, then $K \cap Q$ "blocks a part of $K \cap Q'$ from view". If $\Hd K > 1$ (the only interesting case), this not unreasonable: Marstrand's projection theorem \cite{MR0063439} tells us that we may expect both $\pi_{(1,0)}(K \cap Q)$ and $\pi_{(1,0)}(K \cap Q')$ to have positive length (at least if $(1,0)$ is replaced by a generic choice $e \in S^{1}$). If these positive-length sets, moreover, happen to intersect, then at least a part of $K \cap Q'$ "hides behind" $K \cap Q$.  

The main point in the proof of Theorem \ref{main} is to quantify -- even if quite weakly -- the idea above. Here is a false, but perhaps illuminating, statement: the typical $\pi_{e}$-projection of an $s$-dimensional set $K \subset \R^{2}$, with $s > 1$, not only has positive length, but actually "fills" $\spa(e)$ up to a set of dimension $2 - s < 1$. This is formally false for the reason that $K_{e} := \pi_{e}(K)$ is compact, and certainly does not fill most of $\spa(e)$. But, whenever $K_{e}$ has positive length, then any dense union of translates of $K_{e}$ fills $\R$ up to a $\calH^{1}$-null set. And if $K_{e}$ is the typical projection of an $s$-dimensional compact set, $s > 1$, then $K_{e}$ satisfies something even better:
\begin{proposition}\label{mainProp} Let $1 \leq s \leq 2$, and assume that $E \subset \R$ is a Borel set supporting a Borel probability measure $\nu$ with
\begin{equation}\label{form23} \int_{\R} |\hat{\nu}(\xi)|^{2}|\xi|^{s - 1} \, d\xi < \infty. \end{equation} 
Then any dense union of $E$ covers all of $\R$, except for a set of dimension $\leq 2 - s$. In other words, if $D \subset \R$ is dense, then $\Hd [\R \, \setminus \, (E + D)] \leq 2 - s$.
\end{proposition}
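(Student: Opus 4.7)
I argue by contradiction via Fourier analysis on $\R$. The case $s = 1$ is trivial since then $2 - s = 1$, so assume $s > 1$. Suppose $\Hd[\R \setminus (E + D)] > t'$ for some $t' \in (2 - s,\, 1)$; by Frostman's lemma there exists a Borel probability measure $\mu$ supported on a compact set $K \subset \R \setminus (E + D)$ with finite $t'$-energy, equivalently $\int |\hat\mu(\xi)|^{2} |\xi|^{t'-1}\, d\xi < \infty$.

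The first step is to show that $\hat\mu\,\overline{\hat\nu} \in L^{1}(\R)$, so that by Fourier inversion $\mu * \tilde\nu$ (with $\tilde\nu(A) := \nu(-A)$) is a continuous function $g \colon \R \to \R$. By the Cauchy--Schwarz inequality,
\[
\int_{\R} |\hat\mu(\xi)\,\hat\nu(\xi)|\, d\xi \;\leq\; \left(\int |\hat\mu|^{2} |\xi|^{t'-1}\, d\xi\right)^{\!1/2}\!\left(\int |\hat\nu|^{2} |\xi|^{1-t'}\, d\xi\right)^{\!1/2}.
\]
The first factor is finite by construction. For the second, I split at $|\xi| = 1$: near the origin $|\hat\nu| \leq 1$ and $\int_{0}^{1} \xi^{1-t'}\, d\xi < \infty$ since $t' < 2$, while for $|\xi| \geq 1$ the critical inequality $s + t' > 2$ gives $|\xi|^{1-t'} \leq |\xi|^{s-1}$, so the tail is bounded by hypothesis \eqref{form23}.

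The second step shows $g \equiv 0$ by a compactness/support argument. Reducing (WLOG) to $E$ closed by replacing $E$ with $\spt \nu$, the compact set $K$ is disjoint from each closed set $E + d$ for $d \in D$, so $\dist(K, E + d) > 0$. A compactness extraction then yields $\overline{K - E} \cap D = \emptyset$: if $d = \lim_{n}(k_{n} - e_{n})$ with $k_{n} \in K, e_{n} \in E$, pass to a subsequence with $k_{n} \to k \in K$; then $e_{n} \to k - d$, and closedness of $E$ forces $k - d \in E$, so $k \in E + d$, contradicting $K \cap (E + d) = \emptyset$. Hence $\R \setminus \overline{K - E}$ is an open set containing the dense set $D$, so $\overline{K - E}$ has empty interior. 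Since $\spt(\mu * \tilde\nu) \subset \overline{\spt \mu - \spt \nu} \subset \overline{K - E}$, the continuous function $g$ vanishes on a dense open set, hence identically. But $\int_{\R} g\, dx = (\mu * \tilde\nu)(\R) = \mu(\R)\, \nu(\R) = 1$, a contradiction.

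The main technical obstacle is the Fourier $L^{1}$ estimate on $\hat\mu\,\overline{\hat\nu}$: the Cauchy--Schwarz weights $|\xi|^{\pm(1-t')}$ are forced by dimensional balance, and convergence of the $\hat\nu$-factor hinges precisely on the threshold $s + t' > 2$, which is exactly the dimensional content $2 - s$ of the proposition. Once $g$ is known to be continuous, the support/density argument is essentially routine.
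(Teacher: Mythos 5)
Your argument is correct and follows essentially the same route as the paper: pick a Frostman measure on a compact subset of $\R \setminus (E+D)$, show via Cauchy--Schwarz on the Fourier side that its convolution with $\tilde\nu$ has an $L^1$ Fourier transform (hence a continuous density), and derive a contradiction from the fact that the support must avoid the dense set $D$. The only cosmetic difference is in the Cauchy--Schwarz weights: the paper takes the Frostman energy at exponent exactly $2-s$, so its two weights $|\xi|^{1-s}$ and $|\xi|^{s-1}$ match \eqref{form23} on the nose and no splitting at $|\xi|=1$ is needed, whereas you work at an auxiliary $t' > 2-s$ and therefore pay with the small extra step of comparing $|\xi|^{1-t'}$ to $|\xi|^{s-1}$ away from the origin. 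One small caution: the reduction ``WLOG $E = \spt\nu$'' is not literally harmless, since for a merely Borel $E$ one need not have $\spt\nu \subset E$, so $\R\setminus(\spt\nu + D)$ is not a priori comparable to $\R\setminus(E+D)$. This same point is glossed over in the paper's proof as well (which implicitly uses that $A - E$ is closed), and it disappears if one states the proposition for closed $E$ -- which is the only case that is ever used.
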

It is well-known that if $\mu$ is a finite Radon measure on $\R^{2}$ with $I_{s}(\mu) < \infty$, then $\calH^{1}$-almost every projection $\nu = \pi_{e}(\mu)$ satisfies \eqref{form23}. The proof is very simple, see \eqref{form24}. 
\begin{proof}[Proof of Proposition \ref{mainProp}] Assume to the contrary that $\Hd [\R \, \setminus \, (E + D)] > 2 - s$, and pick, using Frostman's lemma, a non-trivial Radon measure $\eta$ with
\begin{equation}\label{form25} \spt \eta \subset \R \, \setminus \, (E + D) \quad \text{and} \quad I_{2 - s}(\eta) \sim \int_{\R} |\hat{\nu}(\xi)|^{2}|\xi|^{1 - s} \, d\xi < \infty. \end{equation}
On the right, we used the well-known Fourier-representation formula for $I_{2 - s}(\nu)$, see \cite[Lemma 12.12]{zbMATH01249699}. The first point in \eqref{form25} in particular implies that if $x \in A := \spt \eta$ and $y \in E$, then there is no number $q \in D$ such that $x - y = q$. In other words,
\begin{displaymath} [A - E] \cap D = \emptyset. \end{displaymath} 
To reach a contradiction, it now suffices to argue that $A - E$ has non-empty interior. To see this, note that $A - E$ contains the support of $\rho := \eta \ast \tilde{\nu}$, where $\tilde{\nu}$ is the measure on $\R$ defined by $\tilde{\nu}(C) := \nu(-C)$. But
\begin{align*} \int |\hat{\rho}(\xi)| \, d\xi = \int_{\R} |\hat{\eta}(\xi)||\widehat{\widetilde{\nu}}(\xi)| \, d\xi \leq \left(\int_{\R} |\hat{\eta}(\xi)|^{2}|\xi|^{1 - s} \, d\xi \right)^{1/2}\left(\int_{\R} |\widehat{\nu}(\xi)|^{2}|\xi|^{s - 1} \, d\xi \right)^{1/2} < \infty \end{align*} 
by \eqref{form23}, \eqref{form25}, and Cauchy-Schwarz, so $\hat{\rho} \in L^{1}(\R)$, and hence $\rho \in C(\R)$. Therefore $\spt \rho \subset A - E$ has non-empty interior, as claimed, and the proof of the proposition is complete. \end{proof}
The idea that the projections of a $> m$-dimensional set to $m$-dimensional subspaces should typically "fill" everything except a $< m$-dimensional set is at the core of the proof of Theorem \ref{main}, and the reader will recognise a more quantitative version of the previous proof appearing in Section \ref{s:badLines}.

\subsection{Acknowledgements} I am thankful to Katrin F\"assler and Eino Rossi for helpful discussions, and to the anonymous reviewer for reading the paper carefully and, making a number of helpful suggestions.

\section{Proof of the main result}

This section contains the proof of Theorem \ref{main}. We assume with no loss of generality that $K \subset [0,1)^{n}$. We write
\begin{displaymath} \tau := \tfrac{1}{50n} \quad \text{and} \quad \varepsilon := 2\tau, \end{displaymath}
and use the variant of Frostman's lemma contained in Appendix \ref{s:frostman}, namely Lemma \ref{frostman}, to find a Radon measure $\mu$ supported on $K$, satisfying
\begin{equation}\label{form36} \mu(B(x,r)) \leq r^{n - \tau}, \qquad x \in \R^{n}, \; r > 0, \end{equation}
and
\begin{equation}\label{form35} \mu(\overline{Q}) \gtrsim \min\{\calH^{n - \tau}_{\infty}(K \cap Q), \ell(Q)^{n} \} \end{equation}
for all dyadic cubes $Q \subset [0,1)^{n}$. Write $s := n - \tfrac{1}{4}$, and note that
\begin{equation}\label{form43} n - \tfrac{1}{2} + \tau < s < n - \tau. \end{equation}
The second inequality combined with \eqref{form36} gives 
\begin{displaymath} I_{s}(\mu) = \iint \frac{d\mu(x) \, d\mu(y)}{|x - y|^{s}} < \infty. \end{displaymath}
The constant $I_{s}(\mu)$ will be regarded as "absolute" below, and the implicit constants in the "$\lesssim"$ notation are allowed to depend on it. I will also abbreviate $\lesssim_{n}$ to $\lesssim$. It might be worth remarking here that nothing prevents the possibility $\mu \equiv 0$: this is, in fact, the case if $\calH^{n - \tau}(K) = 0$, in which case the statement of the theorem simply follows from $\vis_{e}(K) \subset K$.

Next, fix a dyadic scale $\delta \in (0,\tfrac{1}{100})$. Assume with no loss of generality that $\delta^{\varepsilon} \in 2^{-\N}$ (one may restrict attention to those $\delta = 2^{-N} > 0$ such that $N/(50n) \in \N$). The scale $\delta > 0$ may be taken arbitrarily small to begin with, and I will often do so (to negate the effect of certain multiplicative constants) without further mention. Let 
\begin{displaymath} \calQ := \{Q \in \calD_{\delta^{\varepsilon}} : Q \cap K \neq \emptyset\}, \end{displaymath}
where, in general, $\calD_{\eta}$ stands for dyadic sub-cubes $Q \subset [0,1)^{n}$ of side-length $\ell(Q) = \eta \in 2^{-\N}$. Evidently $|\calQ| \leq |\calD_{\delta^{\varepsilon}}| = \delta^{-n\varepsilon}$. For $Q \in \calQ$, let $\mu_{Q} := \mu|_{Q}$. Then of course $I_{s}(\mu_{Q}) \leq I_{s}(\mu)$ for all $Q \in \calQ$, and consequently
\begin{equation}\label{form24} \int_{S^{n - 1}} \int_{e^{\perp}} |\widehat{\mu_{Q}}(\xi)|^{2}|\xi|^{s - (n - 1)} \, d\calH^{n - 1}(\xi) \, d\calH^{1}(e) \sim \int_{\R^{n}} |\widehat{\mu_{Q}}(\xi)|^{2}|\xi|^{s - n} \, d\xi \lesssim 1, \end{equation}
using (generalised) integration in polar coordinates in the first step, see \cite[(24.2)]{MR3617376}, and the well-known Fourier-representation \cite[Lemma 12.12]{zbMATH01249699} for the $s$-energy in the second step. In particular, the "exceptional set"
\begin{displaymath} E_{Q} := \left\{e \in S^{n - 1} : \int_{\R} |\widehat{\mu_{Q}}(r e)|^{2}|r|^{s - 1} \, dr \geq \delta^{-\varepsilon(n + 1)} \right\} \end{displaymath}
has measure $\calH^{n - 1}(E_{Q}) \lesssim \delta^{\varepsilon(n + 1)}$. Noting again that $|\calQ| \lesssim \delta^{-\varepsilon n}$, we conclude that the "total" exceptional set
\begin{displaymath} E := \bigcup_{Q \in \calQ} E_{Q} \end{displaymath}
has length
\begin{equation}\label{form6} \calH^{n - 1}(E) \lesssim \delta^{\varepsilon}. \end{equation}
We now fix $e \in S^{n - 1} \, \setminus \, E$, and claim that
\begin{equation}\label{form7} \calH^{n - \tau}_{\infty}(\vis_{e}(K)) \lesssim \delta^{\varepsilon/2}. \end{equation}  
Before starting the proof, let us briefly observe that the theorem follows immediately from a combination of \eqref{form6} and \eqref{form7}. Namely, the exceptional set $E$ evidently depends on $\delta$, so it would be more accurate to write $E = E(\delta)$. Then, if \eqref{form6} is applied with each $\delta = 2^{-j}$ (small enough), one infers from the Borel-Cantelli lemma that $\calH^{n - 1}$ almost every point $e \in S^{n - 1}$ only lies in finitely many sets $E(2^{-j})$. The remaining points $e \in S^{n - 1}$ satisfy \eqref{form7} for all $\delta = 2^{-j}$ large enough, which in particular implies that
\begin{displaymath} \calH^{n - \tau}_{2^{-\varepsilon j/(2n)}}(\vis_{e}(K)) \lesssim 2^{-\varepsilon j/2} \end{displaymath} 
for all $j \in \N$ large enough (noting that every $\infty$-cover satisfying \eqref{form7} must in fact be a $\delta^{\varepsilon/(2n)}$-cover). In particular, the sequence $\{\calH^{n - \tau}_{2^{-\varepsilon j/(2n)}}(\vis_{e}(K))\}_{j \in \N}$ remains bounded as $j \to \infty$, and it follows that $\Hd \vis_{e}(K) \leq n - \tau$, as claimed.

To get started with \eqref{form7}, for each $Q \in \calQ$, let 
\begin{displaymath} \calQ'_{\delta}(Q) := \{Q_{\delta} \in \calD_{\delta} : Q_{\delta} \subset Q \text{ and } Q \cap K \neq \emptyset\}. \end{displaymath} 
We also write $\calQ'_{\delta}$ for the union of the collections $\calQ'_{\delta}(Q)$, over all $Q \in \calQ$. Thus $\calQ_{\delta}'$ is a cover for $K$. A little technical annoyance is that some cubes in $\calQ'_{\delta}$ may perhaps be \emph{light}, i.e. satisfy $\mu(Q_{\delta}) \leq \delta^{n + \varepsilon}$. (In fact, if $\calH^{n - \tau}(K) = 0$, then all cubes in $\calQ_{\delta}'$ will be light by \eqref{form36}, but in that case there is nothing to prove anyway.) Such cubes turn out to be undesirable, and we wish to get rid of them immediately. The lower bound \eqref{form35} gets used here: for any light cube (if $\delta > 0$ is small enough), evidently
\begin{displaymath} \calH^{n - \tau}_{\infty}(K \cap Q) \lesssim \mu(\overline{Q_{\delta}}) = \mu(Q_{\delta}) \leq \delta^{n + \varepsilon}. \end{displaymath} 
The middle equation follows from the fact that $\mu$ charges no lines by \eqref{form36}. Therefore, if $K_{\mathrm{light}}$ is the part of $K$ contained in the union of the light cubes, we have
\begin{displaymath} \calH^{n - \tau}_{\infty}(\vis_{e}(K) \cap K_{\mathrm{light}}) \leq \calH^{n - \tau}_{\infty}(K_{\mathrm{light}}) \lesssim \delta^{\varepsilon}, \end{displaymath}
and this is even better than \eqref{form7}. Thus, \eqref{form7} will follow once we manage to show that
\begin{equation}\label{form37} \calH^{n - \tau}_{\infty}(\vis_{e}(K) \cap K_{\mathrm{h}}) \lesssim \delta^{\varepsilon/2}, \end{equation}
where $K_{\mathrm{h}} := K \, \setminus \, K_{\mathrm{light}}$ -- i.e. the part of $K$ contained in the union of the "heavy" cubes $\calQ_{\delta} := \{Q_{\delta} \in \calQ_{\delta}' : Q_{\delta} \text{ is not light}\}$ (define also $\calQ_{\delta}(Q) := \calQ_{\delta} \cap \calQ_{\delta}'(Q)$ for $Q \in \calQ$). The upshot of the previous discussion is then that
\begin{equation}\label{form22} \mu(Q_{\delta}) \geq \delta^{n + \varepsilon}, \qquad Q_{\delta} \in \calQ_{\delta}, \end{equation} 
and $\calQ_{\delta}$ is a cover for $K_{\mathrm{h}}$. As a small digression, I point out that $\vis_{e}(K) \cap K_{\mathrm{h}}$ may be a strict subset of $\vis_{e}(K_{\mathrm{h}})$ (as points in $K_{\mathrm{h}}$ may be occasionally be "blocked from view" by points in $K_{\mathrm{light}}$).

Denote the lines parallel to $e$ by $\calL$. We split them into two disjoint sub-families, the "good" lines $\calL_{g}$ and the "bad" lines $\calL_{b}$. Informally, the lines $\ell \in \calL_{b}$ intersect a high "stack" of $\delta$-cubes in some collection $\calQ_{\delta}(Q)$, $Q \in \calQ$, but still manage to "percolate" through $K \cap Q$. More precisely, we define that $\ell \in \calL_{b}$ if there exists $Q \in \calQ$ such that
\begin{equation}\label{form10} |\{Q_{\delta} \in \calQ_{\delta}(Q) : Q_{\delta} \cap \ell(2\delta) \neq \emptyset\}| \geq \delta^{2\varepsilon - 1} \quad \text{and} \quad \ell \cap K \cap \overline{Q} = \emptyset, \end{equation} 
see Figure \ref{fig1}.
\begin{center}
\begin{figure}[h!]
\begin{overpic}[scale = 0.40]{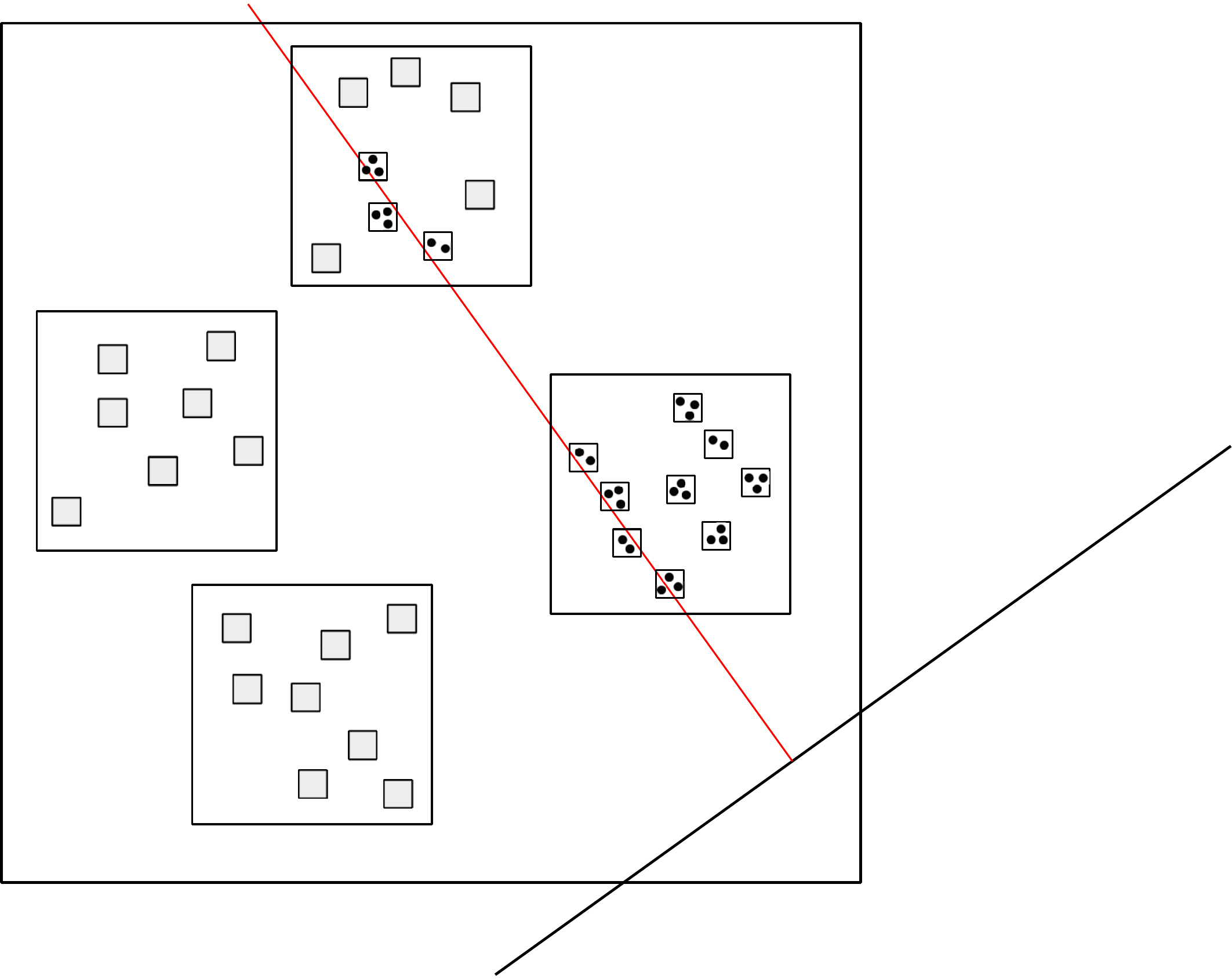}
\put(45,45.5){$Q$}
\put(55,22){$\ell$}
\put(75,32){$e^{\perp}$}
\end{overpic}
\caption{The red line $\ell$ is in $\mathcal{L}_{b}$: it hits many cubes in $\calQ_{\delta}(Q)$ but not $K \cap Q$.}\label{fig1}
\end{figure}
\end{center}
At the risk of over-explaining, I emphasise that the cubes $Q_{\delta} \in \calQ_{\delta}(Q)$ are heavy.  Also define $\calL_{g} := \calL \, \setminus \, \calL_{b}$, and set
\begin{displaymath} L_{b} := \bigcup_{\ell \in \mathcal{L}_{b}} \ell \quad \text{and} \quad L_{g} := \bigcup_{\ell \in \mathcal{L}_{g}} \ell. \end{displaymath} 
The proof of \eqref{form37} now splits into separate estimates for  $L_{g} \cap \vis_{e}(K) \cap K_{\mathrm{h}}$ and $L_{b} \cap \vis_{e}(K) \cap K_{\mathrm{h}}$. 
\subsection{Visible part on the good lines} Sub-divide $[0,1)^{n} \supset K$ into $\sim \delta^{-(n - 1)}$ tubes $\calT_{\delta}$ of width $\delta$ which are perpendicular to $e$. We claim that
\begin{equation}\label{form9} N(\vis_{e}(K) \cap K_{\mathrm{h}} \cap L_{g} \cap T,\delta) \lesssim \delta^{\varepsilon - 1}, \qquad T \in \calT_{\delta}. \end{equation}
This will immediately yield
\begin{displaymath} \calH^{n - \tau}_{\delta}(\vis_{e}(K) \cap K_{\mathrm{h}} \cap L_{g}) \lesssim |\calT_{\delta}| \cdot \delta^{\varepsilon - 1} \cdot \delta^{n - \tau} \lesssim \delta^{\varepsilon/2}, \end{displaymath} 
recalling that $\varepsilon = 2\tau$, and this estimate is better than \eqref{form7}.

To prove \eqref{form9}, fix $T \in \calT_{\delta}$. There are two options. First, it may happen that
\begin{displaymath} |\{Q_{\delta} \in \calQ_{\delta}(Q) : Q_{\delta} \cap T \neq \emptyset\}| < \delta^{2\varepsilon - 1}, \qquad Q \in \calQ, \end{displaymath}
or in other words $T$ never meets a "high stack" of heavy $\delta$-cubes in any single collection $\calQ_{\delta}(Q)$. In this case simply
\begin{displaymath} N(\vis_{e}(K) \cap K_{\mathrm{h}} \cap L_{g} \cap T,\delta) \leq N(K_{\mathrm{h}}  \cap T,\delta) \lesssim \delta^{-\varepsilon} \cdot \delta^{2\varepsilon - 1} \lesssim \delta^{\varepsilon - 1}, \end{displaymath}
recalling that the cubes in $\calQ_{\delta}$ form a cover for $K_{\mathrm{h}}$, and $T$ can meet only meet $\lesssim \delta^{-\varepsilon}$ cubes $Q \in \calQ$. This is \eqref{form9}. The other alternative is where there exists at least one $Q \in \calQ$ such that
\begin{equation}\label{form11} |\{Q_{\delta} \in \calQ_{\delta}(Q) : Q_{\delta} \cap T \neq \emptyset\}| \geq \delta^{2\varepsilon - 1}. \end{equation} 
In particular, we may choose "the $e$-highest" $Q_{1} \in \calQ$ satisfying \eqref{form11}: more precisely, let $Q_{1}$ be the cube $Q \in \calQ$ satisfying \eqref{form11} such that $\inf \{x \cdot e : x \in Q\}$ is maximised (if there are several candidates, pick any of them). Now, every line $\ell \subset T$ evidently satisfies $T \subset \ell(2\delta)$, hence
\begin{displaymath} |\{Q_{\delta} \in \calQ_{\delta}(Q_{1}) : Q_{\delta} \cap \ell(2\delta) \neq \emptyset\}| \geq \delta^{2\varepsilon - 1}, \end{displaymath} 
and consequently, by definition of $\mathcal{L}_{g}$,
\begin{equation}\label{form2} \ell \in \mathcal{L}_{g} \text{ and } \ell \subset T \quad \Longrightarrow \quad \ell \cap K \cap \overline{Q}_{1} \neq \emptyset. \end{equation}
Therefore, if $Q \in \calQ$ is another cube "lower" than $Q_{1}$, now in the precise sense
\begin{equation}\label{form17} \sup_{x \in Q} x \cdot e < \inf_{y \in Q_{1}} y \cdot e, \end{equation}
we claim that $\vis_{e}(K) \cap L_{g} \cap Q \cap T = \emptyset$ (the set $K_{\mathrm{h}}$ plays no role here), so in particular
\begin{equation}\label{form19} N(\vis_{e}(K) \cap L_{g} \cap Q \cap T,\delta) = 0. \end{equation}
Indeed, a hypothetical point $x \in \vis_{e}(K) \cap L_{g} \cap Q \cap T$ would lie on some line $\ell \in \mathcal{L}_{g}$ contained in $T$, which, by \eqref{form2}, satisfies $\ell \cap K \cap \overline{Q}_{1} \neq \emptyset$. This, and \eqref{form17}, means that some point $y \in \ell \cap K \cap \overline{Q}_{1}$ has $\pi_{e}(x) = \pi_{e}(y)$ and $x \cdot e < y \cdot e$, and hence $x \notin \vis_{e}(K)$ (recall the characterisation \eqref{form41} of $\vis_{e}(K)$).

Therefore, $\vis_{e}(K) \cap L_{g} \cap T$ is contained in the union of the cubes $Q \in \calQ$ intersecting $T$ and satisfying the converse of \eqref{form17}, that is,
\begin{equation}\label{form18} \sup_{x \in Q} x \cdot e \geq \inf_{y \in Q_{1}} y \cdot e. \end{equation}
We still need to split these cubes into two groups. First come those cubes $Q \in \calQ$ which satisfy \eqref{form18} and meet the $\delta$-tube $T$, but for which
\begin{equation}\label{form42} \inf_{x \in Q} x \cdot e \leq \inf_{y \in Q_{1}} y \cdot e. \end{equation}
Evidently there are $\sim 1$ such cubes $Q \in \calQ$ (they notably include $Q_{1}$), and for each of them we use the trivial estimate
\begin{equation}\label{form20} N(\vis_{e}(K) \cap L_{g} \cap Q \cap T,\delta) \leq N(Q \cap T,\delta) \lesssim \delta^{\varepsilon - 1}. \end{equation}
Finally, to treat the remaining cubes $Q \in \calQ$ -- which meet $T$ and satisfy the opposite of \eqref{form42} -- we recall the choice of $Q_{1}$ as the "$e$-highest" cube in $\calQ$ to satisfy \eqref{form11}. Therefore \eqref{form11} fails for the remaining $Q \in \calQ$, as specified above, and hence they satisfy
\begin{align} N(\vis_{e}(K) \cap K_{\mathrm{h}} \cap L_{g} \cap Q \cap T,\delta) & \leq N(K_{\mathrm{h}} \cap L_{g} \cap Q \cap T,\delta) \notag\\
&\label{form21} \lesssim |\{Q_{\delta} \in \calQ_{\delta}(Q) : Q_{\delta} \cap T \neq \emptyset\}| \lesssim \delta^{2\varepsilon - 1}. \end{align}
The number of cubes $Q \in \calQ$ of this type is $\lesssim \delta^{-\varepsilon}$ (just using the trivial estimate that the $\delta$-tube $T$ only meets $\lesssim \delta^{-\varepsilon}$ cubes in $\calQ$). Putting \eqref{form19}, \eqref{form20}, and \eqref{form21} together, we find that
\begin{displaymath} N(\vis_{e}(K) \cap K_{\mathrm{h}} \cap L_{g} \cap T,\delta) \lesssim \delta^{-\varepsilon} \cdot \delta^{2\varepsilon - 1} + \delta^{\varepsilon - 1} \lesssim \delta^{\varepsilon - 1}. \end{displaymath} 
This concludes the proof of \eqref{form9}.

\subsection{Visible part on the bad lines}\label{s:badLines} To complete the proof of \eqref{form37}, and hence \eqref{form7}, it remains to consider the set $\vis_{e}(K) \cap K_{\mathrm{h}} \cap L_{b}$. A very crude estimate will be made here: since $\vis_{e}(K) \cap K_{\mathrm{h}} \cap L_{b} \subset L_{b} \cap [0,1)^{n}$, it suffices to show that
\begin{equation}\label{form16} \calH^{n - \tau}_{\infty}(L_{b} \cap [0,1)^{n}) \lesssim \delta^{1/8}. \end{equation}
To prove \eqref{form16}, we split the lines in $\calL_{b}$ into the natural subsets $\calL_{Q,b}$ associated to individual balls $Q \in \calQ$: we write $\ell \in \calL_{Q,b}$ if the badness condition \eqref{form10} of $\ell$ is satisfied for $Q$. The sets $\calL_{Q,b}$ need not be disjoint, but this is irrelevant: since $|\calQ| \leq \delta^{-\varepsilon n}$, it suffices to prove that
\begin{displaymath} \calH^{n - \tau}_{\infty}([0,1)^{n} \cap L_{Q,b}) \lesssim \delta^{1/8 + \varepsilon n}, \end{displaymath} 
where $L_{Q,b}$ is the union of the lines in $\calL_{Q,b}$, and then sum up the estimates to arrive at \eqref{form16}. Moreover, since $\varepsilon \leq 1/(8n)$, the preceding displayed estimate will clearly follow from
\begin{equation}\label{form12} \calH_{\infty}^{n - 1 - \tau}(\pi_{e}(L_{Q,b})) \leq \delta^{1/4}. \end{equation} 
Assume that \eqref{form12} fails, and write $H := H_{Q,e} := \pi_{e}(L_{Q,b}) \subset [-n,n]$ (all the bad lines of course need to meet $[0,1)^{n}$, and we identify the plane $e^{\perp}$ with $\R^{n - 1}$ in the sequel). Let $\nu$ be a Borel probability measure supported on $H$ satisfying
\begin{equation}\label{form39} \nu(B(x,r)) \lesssim \delta^{-1/4}r^{n - 1 - \tau}. \end{equation} 
For this, use Frostman's lemma, see \cite[Theorem 8.8]{zbMATH01249699}, and in particular the sharp version that the "best multiplicative constant" of $\nu$ is comparable to the inverse of the Hausdorff content of $H$. The same estimate alternatively follows from Lemma \ref{frostman}, applying the lower bound \eqref{lowerBound} at unit scale, and then re-normalising so that a probability measure is obtained.

Recalling from \eqref{form43} that $s > n - \tfrac{1}{2} + \tau$, we have $n - 1 - \tau > \tfrac{1}{2} + 2(n - 1) - s$, and consequently
\begin{equation}\label{form14} I_{1/2 + 2(n - 1) - s}(\nu) \lesssim \delta^{-1/4} \end{equation}
by \eqref{form39}, see \cite[p. 109]{zbMATH01249699} for this standard calculation. Since every line in $\calL_{Q,b}$ misses $K \cap \overline{Q} \supset \spt \mu_{Q}$ by definition, and $\nu$ is supported on the $\pi_{e}$-projection of these lines, we have 
\begin{displaymath} \spt \mu_{Q,e} \cap \spt \nu = \emptyset, \end{displaymath} 
where $\mu_{Q,e} := \pi_{e}(\mu_{Q})$. Both sets $\mu_{Q,e}$ and $\spt \nu$ are compact, so also their $\eta$-neighbourhoods are disjoint for $0 < \eta \ll \delta$ small enough, and hence
\begin{equation}\label{form13} 0 = \int \mu_{Q,e} \ast \varphi_{\eta} \, d\nu = \int_{\R^{n - 1}} \hat{\varphi}(\eta \xi) \widehat{\mu_{Q,e}}(\xi) \bar{\hat{\nu}}(\xi) \, d\xi. \end{equation}
Here $\varphi_{\eta}(x) = \eta^{-(n - 1)}\varphi(x/\eta)$, where $\varphi$ is any standard bump function on $\R^{n - 1}$ (smooth, non-negative, compactly supported, integral one, and $\varphi(0) > 0$). We remind the reader here that $e \in S^{n - 1} \, \setminus \, E$, so in particular $e \notin E_{Q}$, which meant that
\begin{equation}\label{form3} \int_{\R^{n - 1}} |\widehat{\mu_{Q,e}}(\xi)|^{2}|\xi|^{s - (n - 1)} \, d\xi \leq \delta^{-\varepsilon(n + 1)}. \end{equation}
(This also used the standard fact, see \cite[(5.15)]{MR3617376}, that the Fourier transform of the projected measure $\mu_{Q,e} = \pi_{e}(\mu_{Q})$ coincides with the restriction of $\widehat{\mu_{Q}}$ to the subspace $e^{\perp}$.) Now, we estimate the right hand side of \eqref{form13} as follows:
\begin{align*} \eqref{form13} & \geq \left| \int_{\R^{n - 1}} \widehat{\varphi}(C\delta \xi) \widehat{\varphi}(\eta \xi) \widehat{\mu_{Q,e}}(\xi) \bar{\hat{\nu}}(\xi) \, d\xi \right|\\
& - \left| \int_{\R^{n - 1}} [1 - \widehat{\varphi}(C\delta \xi)]\widehat{\varphi}(\eta \xi)\widehat{\mu_{Q,e}}(\xi)\bar{\hat{\nu}}(\xi) \, d\xi \right| =: I_{1} - I_{2}. \end{align*} 
Here $C \geq 1$ is an absolute constant to be chosen momentarily. We plan to estimate $I_{1}$ from below and $I_{2}$ from above, and show that in fact $I_{1} > I_{2}$ (for $\delta > 0$ small enough). To estimate $I_{2}$ from above, note that $\widehat{\varphi}$ is a bounded Lipschitz function with $\widehat{\varphi}(0) = 1$, so
\begin{displaymath} |1 - \widehat{\varphi}(C\delta \xi)| = |\widehat{\varphi}(0) - \widehat{\varphi}(C\delta \xi)| \lesssim \min\{|\delta \xi|,1\} \leq \delta^{1/4}|\xi|^{1/4}. \end{displaymath} 
Consequently, using also Cauchy-Schwarz, \eqref{form14} and \eqref{form3},
\begin{align} I_{2} & \lesssim \delta^{1/4} \int_{\R^{n - 1}} |\xi|^{1/4} |\widehat{\mu_{Q,e}}(\xi)||\bar{\hat{\nu}}(\xi)| \, d\xi \notag\\
& \leq \delta^{1/4} \left( \int_{\R^{n - 1}} |\widehat{\mu_{Q,e}}(\xi)|^{2}|\xi|^{s - (n - 1)} \, d\xi \right)^{1/2}\left(\int_{\R^{n - 1}} |\hat{\nu}(\xi)|^{2}|\xi|^{(1/2 + 2(n - 1) - s) - (n - 1)} \, d\xi \right)^{1/2} \notag\\
&\label{form15} \leq \delta^{1/4} \cdot \delta^{-\varepsilon(n + 1)/2} \cdot \delta^{-1/8} \leq \delta^{1/16}, \end{align} 
since $\varepsilon(n + 1)/2 = (n + 1)/(100n) \leq 1/16$. Finally, to obtain a lower bound for $I_{1}$, we use Parseval (again):
\begin{displaymath} I_{1} = \int [\varphi_{C\delta} \ast \varphi_{\eta} \ast \mu_{Q,e}(r)] \, d\nu(r). \end{displaymath} 
Now recall that $\nu$ was a probability measure supported on the set $H = \pi_{e}(L_{Q,b})$. By definition, if $r \in H$, then $\ell := \pi_{e}^{-1}\{r\} \in \calL_{Q,b}$, which in particular means that $\ell(2\delta)$ meets $\geq \delta^{2\varepsilon - 1}$ cubes $Q_{\delta} \in \calQ_{\delta}(Q)$. All of these cubes are heavy, and contained in $Q$, and hence satisfy $\mu_{Q}(Q_{\delta}) = \mu(Q_{\delta}) \geq \delta^{n + \varepsilon}$ (recalling \eqref{form22}). It follows that 
\begin{displaymath} \mu_{Q}(\ell(2\delta)) \gtrsim \delta^{2\varepsilon - 1} \cdot \delta^{n + \varepsilon} \geq \delta^{3\varepsilon + (n - 1)}, \end{displaymath} 
which easily implies that
\begin{displaymath} \psi_{C\delta} \ast \varphi_{\eta} \ast \mu_{Q,e}(r) \gtrsim \delta^{3\varepsilon}, \end{displaymath}
if $C \geq 1$ is chosen sufficiently large, and $\eta > 0$ sufficiently small (note that $\eta > 0$ can be taken arbitrarily small, even in a manner depending on $\delta$). Consequently $I_{1} \gtrsim \delta^{3\varepsilon}$, using that $\nu$ is a probability measure. Since $3\varepsilon = 3/(25n) \leq 3/50 < 1/16$ for $n \geq 2$, we see from this estimate and \eqref{form15} that $I_{1} - I_{2} > 0$, for all $\delta > 0$ sufficiently small. This contradicts \eqref{form13} and concludes the proof of \eqref{form16} -- and also completes the proof of Theorem \ref{main}.

\appendix

\section{Frostman's lemma with lower bounds}\label{s:frostman}

Frostman's lemma \cite[Theorem 8.8]{zbMATH01249699} states that if $E \subset \R^{n}$ is a compact set with $\calH^{s}_{\infty}(E) > 0$, then $K$ supports a measure $\mu$ satisfying the growth bound $\mu(B(x,r)) \lesssim_{n} r^{s}$, and with total variation $\|\mu\| \geq \calH^{s}_{\infty}(E)$. One might hope to improve the lower bound to $\mu(B(x,r)) \gtrsim \calH_{\infty}^{s}(B(x,r) \cap E)$ for all $x \in E$ and $r > 0$, but I do not know if this is true (and it frankly sounds a little too optimistic). The next lemma gives a weaker substitute, which turns out to be good enough for the application in this paper:

\begin{lemma}\label{frostman} Let $0 \leq s \leq n$, and let $E \subset [0,1)^{n}$ compact. Then, there exists a Radon measure $\mu$ supported on $E$, and satisfying
\begin{equation}\label{upperBound} \mu(B(x,r)) \lesssim_{n} r^{s}, \qquad x \in \R^{n}, \; r > 0, \end{equation}
and
\begin{equation}\label{lowerBound} \mu(\overline{Q}) \gtrsim_{n} \min\{\calH^{s}_{\infty}(E \cap Q),|Q|\} \end{equation}
for all dyadic cubes $Q \subset [0,1)^{n}$, where $|\cdot|$ stands for Lebesgue measure.  \end{lemma}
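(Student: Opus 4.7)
My plan is to construct $\mu$ by a variant of the standard dyadic proof of Frostman's lemma, as in \cite[Theorem 8.8]{zbMATH01249699}. That proof fixes a fine scale $2^{-N}$, assigns initial mass $\ell(Q)^{s}$ to every $Q \in \calD_{2^{-N}}$ meeting $E$, and propagates the mass upward: at each level $j < N$ and each $Q \in \calD_{2^{-j}}$, set $m(Q) := \sum_{Q' \text{ child}} m(Q')$, and if this exceeds $\ell(Q)^{s}$, uniformly rescale all descendants of $Q$ so that $m(Q) = \ell(Q)^{s}$. Distributing the final level-$N$ masses as Lebesgue densities on each cube and passing to a weak-$\ast$ limit as $N \to \infty$ yields a Radon measure on $E$ satisfying \eqref{upperBound}, with total mass $\mu([0,1)^{n}) \gtrsim \calH^{s}_\infty(E)$. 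This is the case $Q = [0,1)^{n}$ of \eqref{lowerBound}.

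For the per-cube bound, I would observe that, for any dyadic $Q_{0}$ at level $j_{0} \le N$, the final mass factors as
\begin{displaymath} m(Q_{0}) = m^{\mathrm{in}}(Q_{0}) \cdot \prod_{Q \supsetneq Q_{0}} r(Q), \end{displaymath}
where $m^{\mathrm{in}}(Q_{0})$ is the mass obtained by processing only levels $\ge j_{0}$ inside $Q_{0}$, and $r(Q) \in [2^{s-n}, 1]$ is the rescaling factor at the ancestor $Q$. Running the standard Frostman analysis \emph{inside} $Q_{0}$ yields $m^{\mathrm{in}}(Q_{0}) \gtrsim \calH^{s}_\infty(E \cap Q_{0})$, so the crux is to bound $\prod_{Q} r(Q)$ from below by a dimension-dependent constant. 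The key observation is that $r(Q) < 1$ occurs only when $m^{\mathrm{pre}}(Q)$ has reached the cap $\ell(Q)^{s}$, and maximal rescaling (where $r(Q) = 2^{s-n}$) demands that \emph{all} $2^{n}$ children of $Q$ simultaneously realise their own caps. A short inductive argument, using $\calH^{s}_\infty(E \cap Q) \le C_{n}\ell(Q)^{s}$ to rule out long chains of simultaneous saturation, should limit the number of levels at which substantial rescaling can occur to a constant depending only on $n$.

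The role of the $|Q|$ term in \eqref{lowerBound} is to absorb the regime where $\calH^{s}_\infty(E \cap Q_{0})$ is much smaller than $|Q_{0}|$: here a crude bound on the rescaling product, plus the internal Frostman estimate, already gives $m(Q_{0}) \gtrsim \calH^{s}_\infty(E \cap Q_{0})$, and this is the binding term. In the opposite regime, $\calH^{s}_\infty(E \cap Q_{0})$ is comparable to $\ell(Q_{0})^{s}$ and the cap analysis gives $m(Q_{0}) \sim \ell(Q_{0})^{s} \gtrsim |Q_{0}|$ directly. The main obstacle is the delicate bookkeeping of the rescaling factors across levels; I expect it is precisely the weakening of the lower bound to $\min\{\calH^{s}_\infty(E \cap Q), |Q|\}$ (rather than the more natural $\calH^{s}_\infty(E \cap Q)$ alone) that makes this bookkeeping manageable, and that allows the proof to avoid the deeper Hausdorff-content considerations that the author hints may not hold.
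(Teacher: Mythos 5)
Your proposal follows the outline of the standard dyadic Frostman construction but relies on a uniform rescaling: when a cube's mass exceeds the cap, you rescale \emph{all} of its descendants by the same factor $r(Q) \in [2^{s-n}, 1]$. Your key intermediate claim -- that the product $\prod_{Q \supsetneq Q_0} r(Q)$ is bounded below by a constant depending only on $n$, because ``substantial rescaling can occur at only a bounded number of levels'' -- is false. Already for $E = [0,1)^{n}$ one has $r(Q) = 2^{s-n}$ at \emph{every} level, so $\prod r(Q) \sim \ell(Q_0)^{n-s} \to 0$; the conclusion $m(Q_0) \gtrsim |Q_0|$ then holds only because $m^{\mathrm{in}}(Q_0) \sim \ell(Q_0)^{s}$ in this special case, not because the rescaling product is bounded. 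Your parallel claim that the ``cap analysis gives $m(Q_0) \sim \ell(Q_0)^{s}$'' is likewise false in this example.

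Worse, the uniform-rescaling scheme genuinely fails to produce the required lower bound. Take $n = 2$, $s = 1$, $Q_0 = [0, 2^{-j_0})^2$, and let $E$ consist of a single segment $\Gamma \subset Q_0$ of length $2^{-3j_0/2}$ together with a dense set filling every dyadic cube outside $Q_0$. Then $\calH^{1}_{\infty}(E \cap Q_0) \sim 2^{-3j_0/2}$ and $|Q_0| = 2^{-2j_0}$, so the required bound is $m(Q_0) \gtrsim 2^{-2j_0}$. But $m^{\mathrm{in}}(Q_0) \sim 2^{-3j_0/2}$, and every ancestor $P_j \supsetneq Q_0$ has three full children contributing mass $\sim 2^{-j-1}$ each, forcing $r(P_j) \sim \tfrac{1}{2}$ at every level; hence $m(Q_0) \sim 2^{-3j_0/2} \cdot 2^{-j_0} = 2^{-5j_0/2} \ll 2^{-2j_0}$. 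The uniform rescaling destroys the mass of $Q_0$ precisely because $Q_0$ is ``light'' compared to its siblings. The paper's construction is designed to avoid exactly this: when a cube $Q$ overflows, only the ``bad'' part $B = Q \setminus G$ is rescaled, where $G$ is the union of maximal sub-cubes $Q'$ with $\mu^{k}_{\delta}(Q') \leq |Q'|/2$; these low-density sub-cubes are frozen and never rescaled again. This protection mechanism -- rather than any control on a product of rescaling factors -- is what produces the $\min$ with $|Q|$ in the lower bound: once $\mu^{k}_{\delta}(Q_0)$ drops to at most $|Q_0|/2$, the sequence $k \mapsto \mu^{k}_{\delta}(Q_0)$ becomes constant, and the paper shows the single drop below $|Q_0|/2$ loses at most a bounded factor. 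Your approach is missing this mechanism, and the bookkeeping you propose to replace it with cannot work.
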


\begin{proof} We may assume that $\mathcal{H}^{s}_{\infty}(E) > 0$, since otherwise the measure $\mu = 0$ works. We follow the standard proof of Frostman's lemma with minor modifications to achieve the lower bound \eqref{lowerBound}. Let $\delta \in 2^{-\N}$, and let $\calD_{\delta}$ be the collection of dyadic cubes of side-length $\ell(Q) = \delta$ which are contained in $[0,1)^{n}$. Also, let $\calD_{\delta}(E) := \{Q \in \calD_{\delta} : Q \cap E \neq \emptyset\}$, and write
\begin{displaymath} E_{\delta} := \bigcup_{Q \in \calD_{\delta}(E)} Q \subset [0,1)^{n}. \end{displaymath}
We first construct a measure $\mu_{\delta} \in \calM(\overline{E_{\delta}})$, and satisfying \eqref{upperBound}-\eqref{lowerBound} for scales $\delta \leq r \leq 1$. For $Q \in \calD_{\delta}$, start by finding a measure $\mu_{\delta}^{0} \in \calM(\overline{E_{\delta}})$ such that
\begin{displaymath} \mu_{\delta}^{0}(Q) := \begin{cases} \ell(Q)^{s}, & \text{if } Q \in \calD_{\delta}(E), \\ 0, & \text{if } Q \in \calD_{\delta} \, \setminus \, \calD_{\delta}(E). \end{cases} \end{displaymath}
To be more precise, for $Q \in \calD_{\delta}$, let $\mu_{\delta}^{0}|_{Q}$ be a weighted copy of Lebesgue measure on $Q$, with weights determined by the equation above.

Assume that $\mu^{k}_{\delta}$ has already been defined for some $k \geq 0$, and consider a cube $Q \in \calD_{2^{k + 1}\delta}$. If 
\begin{displaymath} \mu_{\delta}^{k}(Q) \leq \ell(Q)^{s} = (2^{k + 1}\delta)^{s}, \end{displaymath}
set
\begin{displaymath} \mu_{\delta}^{k + 1}|_{Q} := \mu_{\delta}^{k}|_{Q}. \end{displaymath}
If, on the other hand,
\begin{equation}\label{form74} \mu_{\delta}^{k}(Q) > \ell(Q)^{s}, \end{equation}
define $\mu^{k + 1}_{\delta}|_{Q}$ as follows. Consider the (possibly empty) family $\calG := \calG_{Q}^{k}$ of maximal dyadic sub-cubes $Q' \subset Q$ of side-length $\delta \leq \ell(Q') \leq \ell(Q)$ such that
\begin{displaymath} \mu_{\delta}^{k}(Q') \leq |Q'|/2. \end{displaymath} 
The cubes in $\calG$ are disjoint, by maximality, and their union $G := \cup \calG$ satisfies
\begin{displaymath} \mu_{\delta}^{k}(G) \leq \sum_{Q' \in \calG} \frac{|Q'|}{2} \leq \frac{|Q|}{2} \leq \frac{\ell(Q)^{s}}{2}. \end{displaymath}
Then, write $B := B_{Q}^{k} := Q \, \setminus \, G \subset Q$, and define
\begin{equation}\label{form30} \mu_{\delta}^{k + 1}|_{G} := \mu_{\delta}^{k}|_{G} \quad \text{and} \quad \mu_{\delta}^{k + 1}|_{B} := \frac{\ell(Q)^{s}}{2 \cdot \mu_{\delta}^{k}(Q)} \cdot \mu_{\delta}^{k}|_{B}. \end{equation} 
Note that
\begin{equation}\label{form29} \mu_{\delta}^{k + 1}(Q) \leq \frac{\ell(Q)^{s}}{2} + \frac{\ell(Q)^{s}}{2 \cdot \mu_{\delta}^{k}(Q)} \cdot \mu_{\delta}^{k}(B) \leq \ell(Q)^{s}. \end{equation}
Moreover, since $\mu_{\delta}^{k}(G) \leq \ell(Q)^{s}/2 \leq \mu_{\delta}^{k}(Q)/2$ by \eqref{form74}, we have $\mu_{\delta}^{k}(B) \geq \mu_{\delta}^{k}(Q)/2$, and consequently
\begin{equation}\label{form28} \mu_{\delta}^{k + 1}(Q) \geq \frac{\ell(Q)^{s}}{4} \cdot \frac{2 \cdot \mu_{\delta}^{k}(B)}{\mu_{\delta}^{k}(Q)} \geq \frac{\ell(Q)^{s}}{4}. \end{equation}
We have now defined $\mu_{\delta}^{k + 1}$ on one cube $Q \in \calD_{2^{k + 1}\delta}$, and we repeat the same procedure on each of them. It is worth pointing out that
\begin{equation}\label{form40} \mu_{\delta}^{k + 1}(A) \leq \mu_{\delta}^{k}(A), \qquad A \subset \R^{n}, \: k \geq 0, \end{equation}
since $\ell(Q)^{s}/(2 \cdot \mu_{\delta}^{k}(Q)) < 1/2$ in \eqref{form30} (again by \eqref{form74}).

Let $N \geq 0$ be the index such that $2^{N}\delta = 1$, and set $\mu_{\delta} := \mu_{\delta}^{N}$. Then $\mu_{\delta}([0,1)^{n}) \leq 1$ by \eqref{form29} with $Q = [0,1)^{n}$, and since $\mu_{\delta}(\R^{n} \, \setminus \, [0,1)^{n}) = 0$, we also have 
\begin{equation}\label{form75} \mu_{\delta}(Q) \leq \ell(Q)^{s} \quad \text{for all } Q \text{ dyadic with } \ell(Q) \geq 1. \end{equation}

We next plan to check the bounds \eqref{upperBound}-\eqref{lowerBound} for the measure $\mu_{\delta}$. We start by verifying a version of \eqref{upperBound} for dyadic cubes: fix a cube $Q \in \calD_{2^{k}\delta}$ for some $k \geq 0$. If $k \geq N$, just recall \eqref{form75}. If $k < N$, the construction ensures that $\mu_{\delta}^{k}(Q) \leq \ell(Q)^{s}$, and then \eqref{form40} implies that $\mu_{\delta}(Q) \leq \mu_{\delta}^{k}(Q) \leq \ell(Q)^{s}$. So, we conclude that $\mu_{\delta}(Q) \leq \ell(Q)^{s}$ for all dyadic cubes of side-length $\geq \delta$. Since every ball $B(x,r)$, with $x \in \R^{n}$ and $r \geq \delta$, can be covered by $m \lesssim_{n} 1$ such dyadic cubes $Q_{1},\ldots,Q_{m}$ of side-lengths $\ell(Q_{j}) \in [r,2r)$, we infer that 
\begin{equation}\label{form34} \mu(B(x,r)) \lesssim_{n} r^{s}, \qquad x \in \R^{n}, \; r \geq \delta. \end{equation}

Next, we verify the lower bound \eqref{lowerBound} for the measure $\mu_{\delta}$, namely
\begin{equation}\label{form32} \mu_{\delta}(\overline{Q}) \geq \mu_{\delta}(Q) \geq a_{n} \min\{\calH^{s}_{\infty}(E \cap Q),|Q|\}, \qquad Q \in \calD_{2^{k}\delta}, \, 0 \leq k \leq k_{0}. \end{equation}
The constant $a_{n} > 0$ will only depend on $n$. So, fix 
\begin{displaymath} Q \in \calD_{2^{k_{0}}\delta} \quad \text{for some} \quad 0 \leq k_{0} \leq N. \end{displaymath}
We start by observing that
\begin{equation}\label{form31} \mu_{\delta}^{k_{0}}(Q) \geq b_{n} \cdot \calH^{s}_{\infty}(E \cap Q), \end{equation}
where $b_{n} > 0$ is another constant, to be determined in a moment. This follows from the fact that every point $x \in E_{\delta} \cap Q$ is contained in some maximal cube $Q_{x} \subset Q$ of side-length $\delta \leq \ell(Q_{x}) \leq \ell(Q)$ such that $\mu_{\delta}^{k_{0}}(Q_{x}) \geq \ell(Q_{x})^{s}/4$. Indeed, we may take $Q_{x}$ to be the largest cube satisfying $x \in Q_{x} \subset Q$, where alternative \eqref{form74} occurred up to step $k_{0}$ (or simply $Q_{x} \in \calD_{\delta}$ if \eqref{form74} does not occur before and including step $k_{0}$ on any cube containing $x$, because then $\mu^{k}_{\delta}(Q_{x}) = \mu^{0}_{\delta}(Q_{x}) = \delta^{s} = \ell(Q_{x})^{s}$). Then, denoting $0 \leq k \leq k_{0}$ the index such that $Q_{x} \in \calD_{2^{k}\delta}$, we see from \eqref{form28}, and the maximality of $Q_{x}$, that 
\begin{displaymath} \mu^{k_{0}}_{\delta}(Q_{x}) = \mu^{k}_{\delta}(Q_{x}) \geq \ell(Q_{x})^{s}/4. \end{displaymath}
Now, if $\mathfrak{m}(Q)$ is the collection of these maximal, hence disjoint, cubes $Q_{x} \subset Q$, we find that
\begin{displaymath} \mu_{\delta}^{k_{0}}(Q) = \sum_{Q' \in \mathfrak{m}(Q)} \mu_{\delta}^{k_{0}}(Q') \geq \tfrac{1}{4} \sum_{Q \in \mathfrak{m}(Q)} \ell(Q')^{s} \gtrsim_{n} \calH^{s}_{\infty}(E \cap Q), \end{displaymath} 
as claimed in \eqref{form31}, with constant $b_{n} := \tfrac{1}{4} \diam([0,1]^{n})^{-n}$.

We now claim that \eqref{form32} holds for some constant $a_{n} \gtrsim b_{n}$. To this end, assume that \eqref{form32} fails with constant $b_{n}$. By \eqref{form31} and \eqref{form40}, we certainly have $\mu_{\delta}^{k}(Q) \geq b_{n} \cdot \min\{\calH^{s}_{\infty}(E \cap Q),|Q|\}$ for $0 \leq k \leq k_{0}$, so the failure of \eqref{form32} means that there exists a first index $k_{0} < k_{1} \leq N$ such that
\begin{displaymath} \mu_{\delta}^{k_{1}}(Q) < b_{n} \cdot \min\{\calH^{s}_{\infty}(E \cap Q),|Q|\} \leq b_{n} \cdot |Q| < |Q|/2. \end{displaymath}
Since $k_{1}$ is the first index with this property, we conclude that the unique cube $Q_{1} \in \calD_{2^{k_{1}}\delta}$ containing $Q$ must satisfy alternative \eqref{form74} with $k = k_{1} - 1$ (otherwise $\mu_{\delta}^{k_{1}}(Q) = \mu_{\delta}^{k_{1} - 1}(Q)$). Then,
\begin{equation}\label{form33} \mu_{\delta}^{k_{1}}(Q) \geq \frac{\ell(Q_{1})^{s}}{2 \cdot \mu_{\delta}^{k_{1} - 1}(Q_{1})} \cdot \mu_{\delta}^{k_{1} - 1}(Q) \gtrsim_{n} b_{n} \cdot \min\{\calH^{s}_{\infty}(Q \cap E),|Q|\}, \end{equation}
using that $\mu_{\delta}^{k_{1} - 1}(Q_{1}) \lesssim_{n} \ell(Q_{1})^{s}$ (that is, even though $\mu_{\delta}^{k_{1} - 1}(Q_{1}) > \ell(Q_{1})^{s}$ by alternative \eqref{form74}, the converse inequality still "almost" holds, since it holds for the children of $Q_{1}$).

Now, for all indices $k_{1} \leq k < N$, the cube $Q$ will satisfy 
\begin{displaymath} \mu_{\delta}^{k}(Q) \leq \mu_{\delta}^{k_{1}}(Q) < |Q|/2, \end{displaymath}
and hence will be contained in the "good set" $G$ of step $k$ (associated with the particular cube of that step which happens to contain $Q$). Consequently, recalling \eqref{form30}, the value $k \mapsto \mu_{\delta}^{k}(Q)$ remains constant for $k_{1} \leq k \leq N$, and \eqref{form32} now follows from \eqref{form33}. 

The rest of the proof is carried out as in the usual proof of Frostman's lemma. After passing to a subsequence, the measures $\mu_{\delta}$ converge to a non-negative Radon measure supported on $E$ (noting that $\mu_{\delta}$ is supported on $\overline{E_{\delta}}$). The upper and lower bounds in \eqref{upperBound}-\eqref{lowerBound} follow from \eqref{form34} and \eqref{form32}, and standard results on weak convergence, see \cite[Theorem 1.24]{zbMATH01249699}. This completes the proof. \end{proof}

\bibliographystyle{plain}
\bibliography{references}

\def\cprime{$'$}
\begin{thebibliography}{10}

\bibitem{MR2928497}
Ida Arhosalo, Esa J\"{a}rvenp\"{a}\"{a}, Maarit J\"{a}rvenp\"{a}\"{a},
  Micha{\l} Rams, and Pablo Shmerkin.
\newblock Visible parts of fractal percolation.
\newblock {\em Proc. Edinb. Math. Soc. (2)}, 55(2):311--331, 2012.

\bibitem{MR3458388}
M.~Bond, I.~{\L}aba, and J.~Zahl.
\newblock Quantitative visibility estimates for unrectifiable sets in the
  plane.
\newblock {\em Trans. Amer. Math. Soc.}, 368(8):5475--5513, 2016.

\bibitem{MR3188064}
Matthew Bond, Izabella {\L}aba, and Alexander Volberg.
\newblock Buffon's needle estimates for rational product {C}antor sets.
\newblock {\em Amer. J. Math.}, 136(2):357--391, 2014.

\bibitem{MR1762426}
Marianna Cs\"{o}rnyei.
\newblock On the visibility of invisible sets.
\newblock {\em Ann. Acad. Sci. Fenn. Math.}, 25(2):417--421, 2000.

\bibitem{MR1798576}
Marianna Cs\"{o}rnyei.
\newblock How to make {D}avies' theorem visible.
\newblock {\em Bull. London Math. Soc.}, 33(1):59--66, 2001.

\bibitem{MR492190}
Roy~O. Davies and Henryk Fast.
\newblock Lebesgue density influences {H}ausdorff measure; large sets
  surface-like from many directions.
\newblock {\em Mathematika}, 25(1):116--119, 1978.

\bibitem{MR2988729}
Kenneth~J. Falconer and Jonathan~M. Fraser.
\newblock The visible part of plane self-similar sets.
\newblock {\em Proc. Amer. Math. Soc.}, 141(1):269--278, 2013.

\bibitem{MR1975783}
Esa J\"{a}rvenp\"{a}\"{a}, Maarit J\"{a}rvenp\"{a}\"{a}, Paul MacManus, and
  Toby~C. O'Neil.
\newblock Visible parts and dimensions.
\newblock {\em Nonlinearity}, 16(3):803--818, 2003.

\bibitem{MR2177426}
Esa J\"{a}rvenp\"{a}\"{a}, Maarit J\"{a}rvenp\"{a}\"{a}, and Juho Niemel\"{a}.
\newblock Transversal mappings between manifolds and non-trivial measures on
  visible parts.
\newblock {\em Real Anal. Exchange}, 30(2):675--687, 2004/05.

\bibitem{MR3526481}
Izabella {\L}aba.
\newblock Recent progress on {F}avard length estimates for planar {C}antor
  sets.
\newblock In {\em Operator-related function theory and time-frequency
  analysis}, volume~9 of {\em Abel Symp.}, pages 117--145. Springer, Cham,
  2015.

\bibitem{MR0063439}
J.~M. Marstrand.
\newblock Some fundamental geometrical properties of plane sets of fractional
  dimensions.
\newblock {\em Proc. London Math. Soc. (3)}, 4:257--302, 1954.

\bibitem{zbMATH01249699}
P.~{Mattila}.
\newblock {\em {Geometry of sets and measures in Euclidean spaces. Fractals and
  rectifiability. 1st paperback ed.}}
\newblock Cambridge: Cambridge University Press, 1st paperback ed. edition,
  1999.

\bibitem{MR0409774}
Pertti Mattila.
\newblock Hausdorff dimension, orthogonal projections and intersections with
  planes.
\newblock {\em Ann. Acad. Sci. Fenn. Ser. A I Math.}, 1(2):227--244, 1975.

\bibitem{MR2044636}
Pertti Mattila.
\newblock Hausdorff dimension, projections, and the {F}ourier transform.
\newblock {\em Publ. Mat.}, 48(1):3--48, 2004.

\bibitem{MR3617376}
Pertti Mattila.
\newblock {\em Fourier analysis and {H}ausdorff dimension}, volume 150 of {\em
  Cambridge Studies in Advanced Mathematics}.
\newblock Cambridge University Press, Cambridge, 2015.

\bibitem{MR2641082}
F.~Nazarov, Y.~Peres, and A.~Volberg.
\newblock The power law for the {B}uffon needle probability of the four-corner
  {C}antor set.
\newblock {\em Algebra i Analiz}, 22(1):82--97, 2010.

\bibitem{MR2327025}
Toby~C. O'Neil.
\newblock The {H}ausdorff dimension of visible sets of planar continua.
\newblock {\em Trans. Amer. Math. Soc.}, 359(11):5141--5170, 2007.

\bibitem{MR3145914}
Tuomas Orponen.
\newblock Slicing sets and measures, and the dimension of exceptional
  parameters.
\newblock {\em J. Geom. Anal.}, 24(1):47--80, 2014.

\bibitem{MR3778538}
Tuomas Orponen.
\newblock A sharp exceptional set estimate for visibility.
\newblock {\em Bull. Lond. Math. Soc.}, 50(1):1--6, 2018.

\bibitem{MR3892404}
Tuomas Orponen.
\newblock On the dimension and smoothness of radial projections.
\newblock {\em Anal. PDE}, 12(5):1273--1294, 2019.

\bibitem{MR2865537}
Tuomas Orponen and Tuomas Sahlsten.
\newblock Radial projections of rectifiable sets.
\newblock {\em Ann. Acad. Sci. Fenn. Math.}, 36(2):677--681, 2011.

\bibitem{MR2329222}
K\'{a}roly Simon and Boris Solomyak.
\newblock Visibility for self-similar sets of dimension one in the plane.
\newblock {\em Real Anal. Exchange}, 32(1):67--78, 2006/07.

\end{thebibliography}

\end{document}